\newtheorem{lemma}{Lemma}[section]
\newtheorem{claim*}{Claim}
\newtheorem{thm}[lemma]{Theorem}
\newcommand{\Aff}{{\mathbb A}}
\newcommand{\PP}{{\mathbb P}}
\newcommand{\F}{{\mathbb F}}
\newcommand{\Q}{{\mathbb Q}}
\newcommand{\Z}{{\mathbb Z}}
\newcommand{\calE}{{\mathcal E}}
\newcommand{\calL}{{\mathcal L}}
\newcommand{\calO}{{\mathcal O}}
\DeclareMathOperator{\Sym}{Sym}
\DeclareMathOperator{\Jac}{Jac}
\numberwithin{equation}{section}
\numberwithin{table}{section}
\newcommand{\defi}[1]{\textsf{#1}} 
\title{A family of varieties with exactly one pointless rational fiber}
\author[B. Viray]{Bianca Viray}
\address{Department of Mathematics, University of California, Berkeley, CA 94720-3840, USA}
\email{bviray@math.berkeley.edu}
\urladdr{http://math.berkeley.edu/~bviray}
\date{}
\thanks{This research was supported by the Mentored Research Award from UC Berkeley}
\begin{document}
	
	\begin{abstract}
		We construct a concrete example of a $1$-parameter family of smooth 
		projective geometrically integral varieties over an open subscheme of 
		$\PP^1_{\Q}$ such that there is exactly one rational fiber with no 
		rational points.   This makes explicit a construction of Poonen.
	\end{abstract}

	\maketitle

	\section{Introduction}

		We construct a family of 
		smooth projective geometrically integral surfaces over an open 
		subscheme of $\PP^1_{\Q}$ with the following curious arithmetic 
		property: there is exactly one $\Q$-fiber with no rational points.  Our 
		proof makes explicit a non-effective construction of 
		Poonen~\cite[Prop. 7.2]{poonen-chatelet}, thus giving ``an extreme 
		example of geometry \emph{not} controlling 
		arithmetic''~\cite[p.2]{poonen-chatelet}.  
		We believe that this is the first example of its kind.
	
		\begin{thm}\label{thm:main}
			Define $P_0(x):=(x^2-2)(3-x^2)$ and $P_\infty(x) := 2x^4 + 3x^2 - 
			1$.  Let $\pi\colon X\to \PP^1_{\Q}$ be the Ch\^atelet surface 
			bundle over $\PP^1_{\Q}$ given by
			\[
				y^2 + z^2 = \left(6u^2 - v^2\right)^2P_0(x) + 
					\left(12v^2\right)^2P_{\infty}(x),
			\]
			where $\pi$ is projection onto $(u:v)$.  Then $\pi(X(\Q)) = 
			\mathbb{A}^1_{\Q}(\Q)$.
		\end{thm}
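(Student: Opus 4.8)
The plan is to analyze the fibers of $\pi$ one at a time. Over a point $t\in\mathbb{A}^1_\Q(\Q)$ the fiber is the Ch\^atelet surface
\[
	X_t\colon\quad y^2+z^2 \;=\; (6t^2-1)^2\,P_0(x)\;+\;144\,P_\infty(x),
\]
while over $(1:0)$ the fiber is $y^2+z^2=36\,P_0(x)$, which after rescaling $y$ and $z$ is Iskovskikh's surface
\[
	X_\infty\colon\quad y^2+z^2 \;=\; (x^2-2)(3-x^2).
\]
So Theorem~\ref{thm:main} is equivalent to the two assertions: (i) $X_\infty(\Q)=\emptyset$; and (ii) $X_t(\Q)\ne\emptyset$ for every $t\in\Q$. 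Writing $s=6t^2-1$ and viewing $P_t(x):=(6t^2-1)^2P_0(x)+144P_\infty(x)$ as a quadratic in $w=x^2$, one finds its discriminant equals $s^4+10656\,s^2+352512$, which is strictly positive for every real $s$, while its leading coefficient $288-s^2$ is nonzero for every rational $t$; hence $P_t$ is separable of degree $4$ and each $X_t$ is a genuine Ch\^atelet surface. The key external input is the theorem of Colliot-Th\'el\`ene, Sansuc and Swinnerton-Dyer that for Ch\^atelet surfaces over a number field the Brauer--Manin obstruction is the only obstruction to the Hasse principle; accordingly (ii) reduces to showing, for each $t\in\Q$, that $X_t$ has a point over every completion of $\Q$ and that the Brauer--Manin pairing does not cut out the empty subset of $X_t(\Adeles_\Q)$.

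Assertion (i) requires nothing beyond the definition of the Brauer--Manin obstruction. On $X_\infty$ the relation $(x^2-2)(3-x^2)=y^2+z^2=N_{\Q(i)/\Q}(y+iz)$ shows that the quaternion class $\mathcal A=\bigl(-1,\,x^2-2\bigr)$ is unramified and defines an element of $\Br(X_\infty)$ generating $\Br(X_\infty)/\Br(\Q)\cong\Z/2$. A place-by-place Hilbert-symbol computation shows that $\inv_v\mathcal A$ is constant on $X_\infty(\Q_v)$ for every $v$: it vanishes at every place $v\ne 2$ --- at $v=\infty$ because a real point forces $2\le x^2\le 3$, and at an odd prime because $\inv_p(-1,\cdot)$ vanishes on $X_\infty(\Q_p)$ there (immediately when $p\equiv 1\pmod 4$, and otherwise because $v_p(x^2-2)$ is forced to be even, since $x^2-2$ and $3-x^2$ cannot both be nonunits) --- whereas at $v=2$ a local point forces $v_2(x)\ge 2$ (or $x=0$), making $x^2-2$ a nonnorm from $\Q_2(i)$, so $\inv_2\mathcal A=\tfrac12$. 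Thus $\sum_v\inv_v\mathcal A=\tfrac12$ on every adelic point, whence $X_\infty(\Adeles_\Q)^{\Br}=\emptyset$ and therefore $X_\infty(\Q)=\emptyset$. (One also checks $X_\infty(\Q_v)\ne\emptyset$ for all $v$, so $X_\infty$ genuinely violates the Hasse principle, though that is not needed for the theorem.)

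For assertion (ii) the work is in making Poonen's nonconstructive argument explicit. Local solubility over $\R$ is automatic: $P_t$, viewed again as a quadratic in $x^2$ with negative constant term and positive discriminant, takes a positive value at some $x^2>0$. At a prime $p\equiv 1\pmod 4$ the element $-1$ is a square in $\Q_p$, so every nonzero value of $P_t$ is a sum of two squares and $X_t(\Q_p)\ne\emptyset$ trivially; for $p=2$ and for the few odd primes dividing the coefficients of $P_0$ and $P_\infty$ (namely $3,7,13,41$) one checks local solubility directly, by cases on $v_p(t)$. The Brauer-group input uses the known description of the Brauer group of a Ch\^atelet surface: $P_t$ is irreducible over $\Q$ unless $s^4+10656\,s^2+352512$ is a rational square, and whenever $P_t$ is irreducible and $\Q[x]/(P_t)$ does not contain $\Q(i)$ one has $\Br(X_t)=\Br(\Q)$, so there is no obstruction; the finitely many remaining $t$, which lie on explicit auxiliary curves, are treated case by case --- either by showing over $\Q$ that no such $t$ exists, or by exhibiting rational points on the corresponding surfaces. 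Combining with Colliot-Th\'el\`ene--Sansuc--Swinnerton-Dyer yields $X_t(\Q)\ne\emptyset$.

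I expect the main obstacle to be exactly this uniform control of the Brauer group in (ii): one must certify that \emph{no} rational $t$ gives rise to a Brauer--Manin obstruction, and this is precisely the step that Poonen's general construction leaves noneffective. The polynomials $P_0$ and $P_\infty$ must be engineered so that the locus of ``bad'' $t$ --- those for which $P_t$ becomes reducible over $\Q$, or acquires $\Q(i)$ as a quadratic subfield --- is cut out by equations such as $(6t^2-1)^4+10656(6t^2-1)^2+352512=\square$, whose only relevant rational solutions should correspond to the fiber at infinity and to nothing else; establishing this, presumably via a local obstruction on those auxiliary curves (or via Faltings together with a finite search), is the technical heart of the paper. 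A more routine but still delicate point is the uniform-in-$t$ verification of local solubility at the primes $2$ and $3$.
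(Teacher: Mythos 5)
Your decomposition agrees with the paper's: the fiber at $(1:0)$ is (a rescaling of) Iskovskikh's counterexample to the Hasse principle, so $\pi(X(\Q))\subseteq\Aff^1_\Q(\Q)$, and for the affine fibers one combines local solubility with the Colliot-Th\'el\`ene--Sansuc--Swinnerton-Dyer results on Ch\^atelet surfaces. (The paper simply cites Iskovskikh for the first half rather than redoing the Brauer--Manin computation, and for the second half it uses their cleaner statement that the Hasse principle holds whenever the degree-$4$ polynomial is irreducible, rather than your route through the Brauer group plus ``Brauer--Manin is the only obstruction''; these are equivalent in substance.) The problem is that the step you correctly identify as the technical heart is left open, and both methods you propose for it would fail. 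The bad locus is controlled by the genus-$1$ curve $w^2=t^4+74t^2+17$ (your condition $s^4+10656s^2+352512=\square$ after $s=12t$). This curve has two rational points at infinity on its smooth projective model, so it is everywhere locally soluble --- there is no local obstruction to exploit --- and Faltings says nothing because the genus is $1$, not $\ge 2$. What works, and what the paper does, is a Mordell--Weil computation: the projective model is isomorphic to its Jacobian, and a descent (in \texttt{Magma}) gives $\Jac(C)(\Q)\cong\Z/2\Z$, so the only rational points are the two at infinity and $t^4+74t^2+17$ is never a rational square.

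Two further gaps. First, your irreducibility criterion is false as stated: a biquadratic $ax^4+bx^2+c$ with non-square discriminant $b^2-4ac$ can still split into two quadratics when $ac$ is a square (e.g.\ $x^4+4=(x^2-2x+2)(x^2+2x+2)$). So besides the discriminant condition one must show that $(2-t^2)(-6t^2-1)$ is never a rational square, which the paper does with a second genus-$1$ Jacobian computation; your fallback analysis of reducible fibers is not carried out. Second, your local-solubility plan covers $p\equiv1\pmod 4$, $\R$, and a finite list of primes, but gives no argument for the remaining primes $p\equiv3\pmod 4$ with $p\ge5$; the paper handles all $p\ge5$ uniformly by noting that $P_{(a:b)}$ is not identically zero mod $p$ and that $y^2+z^2=c$ has a smooth $\F_p$-point for every $c\in\F_p^\times$ by a pigeonhole count. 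Each of these gaps is fillable, but as written the proposal is a correct outline rather than a proof, and its proposed tools for the central step are the wrong ones.
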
	

		Note that the degenerate fibers of $\pi$ do not lie over $\PP^1(\Q)$ so 
		the family of smooth projective geometrically integral surfaces 
		mentioned above contains all $\Q$-fibers.
		
		The non-effectivity in~\cite[Prop. 7.2]{poonen-chatelet} stems from the use of higher genus curves and Faltings' theorem.  (This is described in more detail in~\cite[\S9]{poonen-chatelet}).  We circumvent the use of higher genus curves by an appropriate choice of $P_{\infty}(x)$.

		%
		%

	\section{Background}

		This information can be found in~\cite[\S3,5, and 6]{poonen-chatelet}.  
		We review it here for the reader's convenience.

		Let $\mathcal E$ be a rank $3$ vector sheaf on a $k$-variety $B$.  A 
		\defi{conic bundle} $C$ over $B$ is the zero locus in $\PP\calE$ of a 
		nowhere vanishing zero section $s \in \Gamma(\PP\mathcal E, \Sym^2(\mathcal E))$.  A \defi{diagonal conic bundle} is a conic bundle where 
		$\mathcal E = \mathcal L_1 \oplus\mathcal L_2 \oplus \mathcal L_3$ and 
		$s = s_1 + s_2 + s_3,  s_i \in \Gamma(\PP\mathcal E, \calL_i^{\otimes2})$.
	
		Now let $\alpha \in k^\times,$ and let $P(x) \in k[x]$ be a separable 
		polynomial of degree $3$ or $4$.  Consider the diagonal conic bundle 
		$X$ given by $B = \PP^1, \mathcal L_1 = \mathcal O, \mathcal L_2 = 
		\mathcal O, \mathcal L_3 = \mathcal O(2), s_1 = 1, s_2 = -\alpha, s_3 
		= -w^4P(x/w)$.  This smooth conic bundle contains the affine 
		hypersurface $y^2 - \alpha z^2 = P(x) \subset \Aff^3$ as an open 
		subscheme.  We say that $X$ is the \defi{Ch\^atelet surface} given by
		\[
			y^2 - \alpha z^2 = P(x).
		\]
		Note that since $P(x)$ is not identically zero, $X$ is an integral 
		surface.
	
		A \defi{Ch\^atelet surface bundle over} $\PP^1$ is a flat proper 
		morphism $V \to \PP^1$ such that the generic fiber is a Ch\^atelet 
		surface.  We can construct them in the following way.  Let $P, Q \in 
		k[x,w]$ be linearly independent homogeneous polynomials of degree $4$ 
		and let $\alpha \in k^\times$.  Let $V$ be the diagonal conic bundle 
		over $\PP^1_{(a:b)} \times \PP^1_{(w:x)}$ given by $\mathcal L_1 = 
		\calO, \calL_2 = \calO, \calL_3 = \calO(1,2), s_1 = 1, 
		s_2 = -\alpha, s_3 = -(a^2P + b^2Q)$.  By composing $V \to 
		\PP^1\times\PP^1$ with the projection onto the first factor, we 
		realize $V$ as a Ch\^atelet surface bundle.  We say that $V$ is the 
		Ch\^atelet surface bundle given by 
		\[
			y^2 - \alpha z^2 = a^2P(x) + b^2Q(x),
		\]
		where $P(x) = P(x,1)$ and $Q(x) = Q(x,1)$.  We can also view 
		$a,b$ as relatively prime, homogeneous, degree $d$ polynomials in $u, 
		v$ by pulling back by a suitable degree $d$ map $\phi\colon 
		\PP^1_{(u:v)} \to\PP^1_{(a:b)}$.

	\section{Proof of Theorem~\ref{thm:main}}
		By~\cite{iskovskikh-chatelet}, we know that the Ch\^atelet surface
		\[
			y^2+z^2=(x^2-2)(3-x^2)
		\]
		violates the Hasse principle, i.e. it has $\Q_v$-rational points for all 
		completions $v$, but no $\Q$-rational points.  Thus, $\pi(X(\Q)) 
		\subseteq \mathbb{A}^1_{\Q}(\Q)$.  
		Therefore, it remains to show that $X_{(u:1)}$, the Ch\^atelet surface 
		defined by
		\[
			y^2+z^2 = (6u^2-1)^2P_0(x) + 12^2P_\infty(x),
		\]
		has a rational point for all $u\in \Q$.  
	
		If $P_{(u:1)}:=(6u^2-1)^2P_0(x) + 12^2P_\infty(x)$ is irreducible, 
		then by~\cite{ctssd-chatelet},~\cite{ctssd-chatelet2} we know that 
		$X_{(u:1)}$ satisfies the Hasse principle.  Thus it suffices to show 
		that $P_{(u:1)}$ is irreducible and $X_{(u:1)}(\Q_v)\neq \emptyset$ 
		for all $u\in\mathbb Q$ and all places $v$ of $\Q$.
	
		\subsection{Irreducibility}
		
			We prove that for any $u\in \Q$, the polynomial 
			$P_{(u:1)}\left(x\right)$ is 
			irreducible in $\Q[x]$ by proving the slightly more general 
			statement, that for all $t\in\Q$
			\[
				P_t(x) := (2x^4 + 3x^2 - 1) + t^2(x^2 - 2)(3 - x^2) 
					= x^4(2 - t^2) + x^2(3 + 5t^2) + (-6t^2 - 1)
			\]
			is irreducible in $\Q[x]$.	We will use the fact that if $a,b,c \in 
			\Q$ are such that $b^2-4ac$ and $ac$ are not squares in $\Q$ then 
			$p(x):=ax^4+bx^2+c$ is irreducible in $\Q[x]$.

			Let us first check that for all $t \in \Q$, 
			$\left(3 + 5t^2\right)^2 - 4\left(2 - t^2\right)
			\left(-6t^2 - 1\right)$ is not a square in $\Q$.  
			This is equivalent to proving that the affine curve $C\colon 
			w^2 = t^4 + 74t^2 + 17$ has no rational points.  The smooth 
			projective model, $\overline{C} : w^2 = t^4 + 74t^2s^2 + 17s^4$ in 
			weighted projective space $\PP(1,1,2)$, has $2$ rational points at 
			infinity.  Therefore $\overline{C}$ is isomorphic to its Jacobian.
			A computation in \texttt{Magma} shows that 
			$\Jac(C)(\Q) \cong \Z/2\Z$~\cite{magma}.  Therefore, the points at 
			infinity are the only $2$ rational points of $\overline{C}$ and thus 
			$C$ has no rational points.

			Now we will show that $\left(-6t^2 - 1\right)\left(2 - t^2\right)$ 
			is not a square in $\Q$ for any $t\in \Q$.  As above, this is 
			equivalent to determining whether $C'\colon w^2=(-6t^2-1)(2-t^2)$ 
			has a rational point.  Since $6$ is not a square in $\Q$, this is 
			equivalent to determining whether the smooth projective model, 
			$\overline{C'}$, has a rational point.  The curve $\overline{C'}$ is 
			a genus $1$ curve so it is either isomorphic to its Jacobian or has 
			no rational points.  A computation in \texttt{Magma} shows that 	
			$\Jac\left(C'\right)\left(\Q\right)\cong \Z/2\Z$~\cite{magma}.  Thus 
			$\#C'\left(\mathbb{Q}\right)=0$ or $2$.  If $\left(t,w\right)$ is a 
			rational point of $C'$, then $\left(\pm t,\pm w\right)$ is also a 
			rational point.	Therefore, 	$\#C\left(\mathbb{Q}\right)=2$ if and 
			only if there is a point with $t = 0$ or $w = 0$ and one can easily 
			check that this is not the case.

		\subsection{Local Solvability}

			\begin{lemma}\label{lem:some-local-pts}
				For any point $(u:v)\in \mathbb{P}^1_{\mathbb{Q}}$, the 
				Ch\^atelet surface $X_{(u:v)}$ has $\mathbb{R}$-points and 
				$\mathbb{Q}_p$-points for every prime $p$.
			\end{lemma}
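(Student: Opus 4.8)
The plan is to exhibit, for each point $(u:v)\in\PP^1_\Q$, an explicit point on $X_{(u:v)}$ over $\R$ and over each $\Q_p$. Write $P_{(u:v)}(x) = (6u^2-v^2)^2P_0(x) + (12v^2)^2P_\infty(x)$; it suffices to find, for each completion $K=\R$ or $\Q_p$, a value $x_0\in K$ for which $P_{(u:v)}(x_0)$ is a sum of two squares in $K$, i.e. a norm from $K(i)$. The main structural observation to set up first is that it is enough to treat the two ``ends'' of the parameter: evaluating at suitable $x$, the quantity $P_{(u:v)}(x)$ degenerates to a multiple of either $P_0(x)$ or $P_\infty(x)$, and one checks directly that the original Châtelet surfaces $y^2+z^2=P_0(x)$ and $y^2+z^2=P_\infty(x)$ are everywhere locally solvable — the former because it violates only the Hasse principle (so it has points over every completion), and the latter by a direct check. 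So the first step is: record that $y^2+z^2=P_0(x)$ has $\Q_v$-points for all $v$ (this is Iskovskikh's surface, already cited), and verify the analogous statement for $y^2+z^2=P_\infty(x)$.

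The second step handles $\R$. Over $\R$ a sum of two squares is simply a nonnegative number, so I need $x_0\in\R$ with $P_{(u:v)}(x_0)\ge 0$. Since $6u^2-v^2$ and $v$ cannot both vanish, at least one of the two coefficients $(6u^2-v^2)^2$, $(12v^2)^2$ is positive; choosing $x_0$ in a real interval where the corresponding $P_0$ or $P_\infty$ is strictly positive (e.g. $x_0$ with $x_0^2$ between $2$ and $3$ makes $P_0>0$; $P_\infty(x)=2x^4+3x^2-1$ is positive for $|x|$ large, and also one checks $P_\infty(1)=4>0$) gives $P_{(u:v)}(x_0)>0$ as soon as that coefficient is nonzero, and when a coefficient does vanish the other term alone is positive. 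So the real place is immediate.

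The third and main step is the $p$-adic one. Fix a prime $p$. I would like to reduce to the already-established local solvability of the two boundary surfaces. For a place $p$ where one of $6u^2-v^2$ or $12v^2$ is a $p$-adic \emph{unit up to squares} relative to the other, $P_{(u:v)}(x)$ is, up to a square scalar in $\Q_p^\times$, congruent to $P_0(x)$ or to $P_\infty(x)$ modulo a higher power of $p$, and one transports a smooth $\Q_p$-point of the relevant boundary surface via Hensel's lemma. Concretely: after scaling by $\max(|6u^2-v^2|_p,|12v^2|_p)^{-2}$, one of the two coefficients becomes a unit and the equation becomes $y^2+z^2 \equiv (\text{unit})P_{0\text{ or }\infty}(x) \pmod{p^N}$; a smooth point of $y^2+z^2=cP_\bullet(x)$ over $\Q_p$ for the appropriate unit class $c$ then lifts. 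The genuine work is a finite case analysis: for each small prime $p$ (certainly $p=2,3$, and possibly $p=17$ given the appearance of $17$ in the irreducibility argument) one must check that for \emph{every} residue of the ratio $(6u^2-v^2):(12v^2)$ one of the two reductions does in fact carry a smooth $\Q_p$-point, possibly choosing $x$ at infinity (the degree-$4$ leading behavior) when no finite $x$ works. The hard part will be precisely this: verifying that for the finitely many ``bad'' primes and the finitely many relevant $2$-adic or $3$-adic square-classes of $(6u^2-v^2)/(12v^2)$, an explicit $x_0$ can always be produced making $P_{(u:v)}(x_0)$ a sum of two squares in $\Q_p$; for all but finitely many $p$ the surface has good reduction and the Weil bounds (or a direct smooth-point count) give a point automatically. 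I expect the cleanest organization is to dispatch $p\nmid 2\cdot 3\cdot\operatorname{disc}$ uniformly, then grind through $p\in\{2,3,17\}$ (and any other divisors of the relevant resultants) by hand.
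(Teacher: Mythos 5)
Your treatment of the real place is essentially right, though the logic as written has a slip: when both coefficients $(6u^2-v^2)^2$ and $(12v^2)^2$ are nonzero, arranging $P_0(x_0)>0$ does not by itself give $P_{(u:v)}(x_0)>0$, since the $P_\infty$-term could be negative there. It happens that any $x_0$ with $x_0^2\in(2,3)$ makes \emph{both} $P_0(x_0)$ and $P_\infty(x_0)$ positive, which repairs your argument; the paper instead evaluates $P_{(a:b)}$ at the vertex of the associated quadratic in $x^2$ and checks the sign of the resulting expression directly.

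The $p$-adic part, however, is a plan rather than a proof, and its central reduction does not work. Transporting points from the boundary surfaces $y^2+z^2=P_0(x)$ and $y^2+z^2=P_\infty(x)$ only makes sense when one of $6u^2-v^2$, $12v^2$ is $p$-adically much larger than the other; for a typical $(u:v)$ and typical $p$ both are units of comparable size and $P_{(u:v)}\bmod p$ is a genuine linear combination, close to neither $P_0$ nor $P_\infty$. Your fallback for large $p$ (good reduction plus a point count) is also not uniform: the set of primes of bad reduction of the fiber depends on $(u:v)$ and is unbounded as $(u:v)$ varies, so you cannot reduce to one fixed finite list of small primes. The paper's argument for $p\ge 5$ avoids both problems: with $a,b$ coprime, $P_{(a:b)}\bmod p$ is a nonzero polynomial of degree at most $4$, hence nonzero at some $x\in\F_p$, and a pigeonhole count shows $y^2+z^2$ represents every element of $\F_p$; this yields a smooth $\F_p$-point that lifts by Hensel's lemma whether or not the reduction is good. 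Most importantly, you explicitly defer ``the hard part,'' the analysis at $p=2,3$ (and a spurious $p=17$, which needs no special treatment). That analysis is exactly where the choice of coefficients matters: one must observe that $a=6u^2-v^2$ and $b=12v^2$ force $v_3(b/a)>0$ and $v_2(b/a)\ge 2$ for every $(u:v)\in\PP^1(\Q)$, and these valuation constraints are what make the fibers solvable over $\Q_3$ (taking $x=3^{-n}$ gives $P_{(a:b)}(x)$ of even valuation, which suffices since $\Q_3(\sqrt{-1})/\Q_3$ is unramified) and over $\Q_2$ (the explicit point with $x=0$, $y=a$, where $z^2=a^2\bigl(-7+(b/a)^2\bigr)$ is solvable because the right side is a unit congruent to $1 \bmod 8$ times a square). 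Without identifying these design features, the case analysis you outline has no reason to close.
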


			\begin{proof}
				Let $a = 6u^2-v^2$ and let $b=12v^2$.  We will refer to 
				$a^2 P_0(x) + b^2 P_{\infty}(x)$ both as $P_{(a:b)}$ and 
				$P_{(u:v)}$.
		
				$\mathbb{R}$-points: It suffices to show that given $(u: v)$ 
				there exists an $x$ such that 
				\[
					P_{(a: b)}=x^4(2b^2-a^2)+x^2(3b^2+5a^2)+(-6a^2-b^2)
				\] 
				is positive.  If $2b^2 - a^2$ is positive, then any $x$ 
				sufficiently large will work.  So assume $2b^2 - a^2$ is 
				negative.  Then $\alpha=\frac{-(3b^2 + 5a^2)}{2(2b^2 - a^2)}$ is 
				positive.  We claim $P_{(a: b)}(\sqrt{\alpha})$ is positive.
				\begin{eqnarray*}
					P_{(a: b)}(\sqrt{\alpha}) & = & \alpha^2(2b^2 - a^2) 
						+ \alpha(3b^2 + 5a^2) + (-6a^2 - b^2)\\
					& = & \frac{(3b^2 + 5a^2)^2}{4(2b^2 - a^2)} 
						+ \frac{-(3b^2 + 5a^2)^2}{2(2b^2 - a^2)}
						+ (-6a^2 - b^2)\\
					& = & \frac{1}{4(2b^2 - a^2)}
						\left( 4(2b^2 - a^2)(-6a^2 - b^2)
						- (3b^2 + 5a^2)^2\right)\\
					& = & \frac{1}{4(2b^2 - a^2)}
					\left(-17b^4 - 74a^2b^2 - a^4\right)
				\end{eqnarray*} 
				Since $2b^2-a^2$ is negative by assumption and $-17b^4 - 
				74a^2b^2 - a^4$ is always negative, we have our result.
		
				$\mathbb{Q}_p$-points:  
				\begin{description}
					
					\item[$p\geq 5$] 
						Without loss of generality, let $a$ and 
						$b$ be relatively prime integers.  Let 
						$\overline{X}_{(a:b)}$ denote the reduction of $X_{(a 
						: b)}$ modulo $p$.  We claim that there exists a smooth 
						$\F_p$-point of $\overline{X}_{(a:b)}$ that, by Hensel's 
						lemma, we can lift to a $\Q_p$-point of $X_{(a:b)}$.  
						
						Since $P_{(a:b)}$ has degree at 
						most $4$ and is not identically zero modulo $p$, there 
						is some $x\in \F_p$ such that $P_{(a:b)}\left(x\right)$ 
						is nonzero.  Now let $y, z$ run over all values in 
						$\F_p$.  Then the polynomials $y^2, 
						P_{(a:b)}\left(x\right) - z^2$ each take $(p + 1)/2$ 
						distinct values.  By the pigeonhole principle, $y^2$ and  
						$P_{(a:b)}\left(x\right) - z^2$ must agree for at least 
						one pair $(y,z) \in \F_p^2$ and one can check that this 
						pair is not $(0,0)$.  Thus, this tuple $(x,y,z)$ gives a 
						smooth $\F_p$-point of $\overline{X}_{(a:b)}$.  (The 
						proof above that the quadratic form $y^2 + z^2$ 
						represents any element in $F_p$ is not new. For example, 
						it can be found in~\cite[Prop
						 5.2.1]{Cohen-numbertheoryI}.)

						%
						%
						%

					\item[$p=3$] 
						From the equations for $a$ and $b$, one can check that 
						for any $(u: v) \in \mathbb{P}^1_{\mathbb{Q}}$, 
						$v_3(b/a)$ is positive.  Since 
						$\mathbb{Q}_3(\sqrt{-1})/\mathbb{Q}_3$ is an 
						unramified extension, it suffices to show that given 
						$a,b$ as above, there exists an $x$ such that $P_{(a: 
						b)}(x)$ has even valuation.  Since $v_3(b/a)$ is 
						positive, $v_3(2b^2 - a^2)=2v_3(a)$.  Therefore, if 
						$x=3^{-n}$, for $n$ sufficiently large, the valuation 
						of $P_{(a: b)}(x)$ is $-4n+2v_3(a)$ which is even. 
		
					\item[$p=2$] 
						From the equations for $a$ and $b$, one can check that 
						for any $(u: v) \in \mathbb{P}^1_{\mathbb{Q}}$, 
						$v_2(b/a)$ is at least $2$.  Let $x=0$ and $y=a$.  
						Then we need to find a solution to $z^2 = 
						a^2(-7+(b/a)^2)$.  Since $v_2(b/a)>1$, 
						$-7+(b/a)^2\equiv1^2\mod 8$.  By Hensel's lemma, we 
						can lift this to a solution in $\Q_2$.
				\end{description}
			\end{proof}
	
	\section*{Acknowledgements}
		I thank my advisor, Bjorn Poonen, for suggesting the problem and many 
		helpful conversations.  I thank the referee for suggesting the
		pigeonhole principle argument in Lemma~\ref{lem:some-local-pts}.  I 
		also thank Daniel Erman and the referee for comments improving the 
		exposition.

	\begin{bibdiv}
		\begin{biblist}

			\bib{magma}{article}{
			   author={Bosma, Wieb},
			   author={Cannon, John},
			   author={Playoust, Catherine},
			   title={The Magma algebra system. I. The user language},
			   note={Computational algebra and number theory (London, 1993)},
			   journal={J. Symbolic Comput.},
			   volume={24},
			   date={1997},
			   number={3-4},
			   pages={235--265},
			   issn={0747-7171},
			   review={\MR{1484478}},
			}
			
			\bib{Cohen-numbertheoryI}{book}{
			   author={Cohen, Henri},
			   title={Number theory. Vol. I. Tools and Diophantine equations},
			   series={Graduate Texts in Mathematics},
			   volume={239},
			   publisher={Springer},
			   place={New York},
			   date={2007},
			   pages={xxiv+650},
			   isbn={978-0-387-49922-2},
			   review={\MR{2312337 (2008e:11001)}},
			}

			\bib{ctssd-chatelet}{article}{
			   author={Colliot-Th{\'e}l{\`e}ne, Jean-Louis},
			   author={Sansuc, Jean-Jacques},
			   author={Swinnerton-Dyer, Peter},
			   title={Intersections of two quadrics and Ch\^atelet surfaces.
			 		I},
			   journal={J. Reine Angew. Math.},
			   volume={373},
			   date={1987},
			   pages={37--107},
			   issn={0075-4102},
			   review={\MR{870307 (88m:11045a)}},
			}

			\bib{ctssd-chatelet2}{article}{
			   author={Colliot-Th{\'e}l{\`e}ne, Jean-Louis},
			   author={Sansuc, Jean-Jacques},
			   author={Swinnerton-Dyer, Peter},
			   title={Intersections of two quadrics and Ch\^atelet surfaces. 
					II},
			   journal={J. Reine Angew. Math.},
			   volume={374},
			   date={1987},
			   pages={72--168},
			   issn={0075-4102},
			   review={\MR{876222 (88m:11045b)}},
			}

			\bib{iskovskikh-chatelet}{article}{
			   author={Iskovskih, V. A.},
			   title={A counterexample to the Hasse principle for systems of 
					two quadratic forms in five variables},
			   language={Russian},
			   journal={Mat. Zametki},
			   volume={10},
			   date={1971},
			   pages={253--257},
			   issn={0025-567X},
			   review={\MR{0286743 (44 \#3952)}},
			}

			\bib{poonen-chatelet}{article}{
			   author={Poonen, Bjorn},
			   title={Existence of rational points on smooth projective 
					varieties},
			   journal={J. Eur. Math. Soc. (JEMS)},
			   volume={11},
			   date={2009},
			   number={3},
			   pages={529--543},
			   issn={1435-9855},
			   review={\MR{2505440}},
			}

		\end{biblist}
	\end{bibdiv}
\end{document}